\documentclass[11pt,a4paper]{article}
\usepackage[T1]{fontenc}
\usepackage[utf8]{inputenc}
\usepackage{lmodern}
\usepackage[margin=27mm]{geometry}
\usepackage{amsmath,amssymb,amsthm,mathtools}
\usepackage{booktabs,array,tabularx,longtable}
\usepackage[numbers,sort&compress]{natbib}
\usepackage{algorithm,algpseudocode}
\usepackage{microtype}
\usepackage{xcolor}
\usepackage[hidelinks]{hyperref}
\usepackage[nameinlink,noabbrev]{cleveref}
\usepackage{enumitem}
\renewcommand{\arraystretch}{1.16}
\numberwithin{equation}{section}
\newtheorem{theorem}{Theorem}[section]
\newtheorem{lemma}[theorem]{Lemma}
\newtheorem{proposition}[theorem]{Proposition}

\theoremstyle{definition}
\newtheorem{definition}[theorem]{Definition}
\theoremstyle{remark}

\DeclareMathOperator{\lcm}{lcm}

\newcommand{\one}{\mathbf{1}}

\newcommand{\pending}[1]{\par\noindent\textit{Data to be supplied. #1}\par}

\newcommand{\ReferenceSnapshot}{September 25, 2026}

\newcommand{\TotalAtFifty}{4567863967927930120629311714346947864974124915610296}
\newcommand{\ConnectedAtFifty}{4567863812490955416720841969219753968607349955976192}
\newcommand{\DisconnectedAtFifty}{155436974703908469745127193896366774959634104}
\newcommand{\UnrestrictedTableRows}{%
29 & \texttt{10652577590214241002618} \\
30 & \texttt{207054244547337985329720} \\
31 & \texttt{4161993706714940326838786} \\
32 & \texttt{86424005407555318218067697} \\
33 & \texttt{1851998440392256616901456520} \\
34 & \texttt{40917079037806933457347198438} \\
35 & \texttt{931183927432486423640202376814} \\
36 & \texttt{21810450084104345091773820858196} \\
37 & \texttt{525344362944702962413733395520252} \\
38 & \texttt{13003032308773638146164487141818952} \\
39 & \texttt{330487084031595634132232172180021294} \\
40 & \texttt{8619415114072964191751466252612719377} \\
41 & \texttt{230533227247675454285548999068702309794} \\
42 & \texttt{6319087592497085783571337132123210802321} \\
43 & \texttt{177413127187497235740886087991800080245168} \\
44 & \texttt{5098998905389251773034803769266361673511731} \\
45 & \texttt{149940921587171855739584362970984343156657193} \\
46 & \texttt{4508894979370373105831885412360496862130843299} \\
47 & \texttt{138587154167033251155034541552679549848762484979} \\
48 & \texttt{4351870731024217447549685703640756912833949879605} \\
49 & \texttt{139551606359121154554275644335312475951018945402502} \\
50 & \texttt{4567863967927930120629311714346947864974124915610296} \\
}
\newcommand{\ConnectedTableRows}{%
29 & \texttt{10652570446223068758328} \\
30 & \texttt{207054129798202478947780} \\
31 & \texttt{4161991787056364392993657} \\
32 & \texttt{86423972013842831141067779} \\
33 & \texttt{1851997837239534197240070489} \\
34 & \texttt{40917067742027646230636613788} \\
35 & \texttt{931183708364518646087601058142} \\
36 & \texttt{21810445689689214773057789501098} \\
37 & \texttt{525344271868938707705400377687514} \\
38 & \texttt{13003030360567200772187592732217381} \\
39 & \texttt{330487041060514755173477290891267539} \\
40 & \texttt{8619414137666184155650741761970634546} \\
41 & \texttt{230533204411383026272756189104337347077} \\
42 & \texttt{6319087043198021512104740092530777402157} \\
43 & \texttt{177413113609115042486441572202257674562253} \\
44 & \texttt{5098998560699564475515004308783887431914087} \\
45 & \texttt{149940912607692268107701695024603153774329415} \\
46 & \texttt{4508894739468701917064672399081911536531357642} \\
47 & \texttt{138587147597943684444165814756297043282778275999} \\
48 & \texttt{4351870546772975644217239817541796928811187365690} \\
49 & \texttt{139551601068525711134953857116015783170154234904858} \\
50 & \texttt{4567863812490955416720841969219753968607349955976192} \\
}
\newcommand{\DisconnectedTableRows}{%
29 & \texttt{7143991172244290} \\
30 & \texttt{114749135506381940} \\
31 & \texttt{1919658575933845129} \\
32 & \texttt{33393712487076999918} \\
33 & \texttt{603152722419661386031} \\
34 & \texttt{11295779287226710584650} \\
35 & \texttt{219067967777552601318672} \\
36 & \texttt{4394415130318716031357098} \\
37 & \texttt{91075764254708333017832738} \\
38 & \texttt{1948206437373976894409601571} \\
39 & \texttt{42971080878958754881288753755} \\
40 & \texttt{976406780036100724490642084831} \\
41 & \texttt{22836292428012792809964364962717} \\
42 & \texttt{549299064271466597039592433400164} \\
43 & \texttt{13578382193254444515789542405682915} \\
44 & \texttt{344689687297519799460482474241597644} \\
45 & \texttt{8979479587631882667946381189382327778} \\
46 & \texttt{239901671188767213013278585325599485657} \\
47 & \texttt{6569089566710868726796382506565984208980} \\
48 & \texttt{184251241803332445886098959984022762513915} \\
49 & \texttt{5290595443419321787219296692780864710497644} \\
50 & \texttt{155436974703908469745127193896366774959634104} \\
}
\newcommand{\LabeledTableRows}{%
29 & \begin{tabular}[t]{@{}l@{}}\texttt{92806550484635529288139898051471032653850237781032275}\end{tabular} \\
30 & \begin{tabular}[t]{@{}l@{}}\texttt{54178571689579205256256214619696958221103299101023824201}\end{tabular} \\
31 & \begin{tabular}[t]{@{}l@{}}\texttt{33794692140133684291976181563114413932853044724371478285965}\end{tabular} \\
32 & \begin{tabular}[t]{@{}l@{}}\texttt{224763286005461730802396972091242969872466891338678537848146}\\\texttt{65}\end{tabular} \\
33 & \begin{tabular}[t]{@{}l@{}}\texttt{159073630682530822773001994871084089035561514299587586469104}\\\texttt{94880}\end{tabular} \\
34 & \begin{tabular}[t]{@{}l@{}}\texttt{119579877057814709387685985273327811803694860047609448813608}\\\texttt{20696280}\end{tabular} \\
35 & \begin{tabular}[t]{@{}l@{}}\texttt{953109923170547503360500265508469672579523176079841352310514}\\\texttt{5502142456}\end{tabular} \\
36 & \begin{tabular}[t]{@{}l@{}}\texttt{804143556779821413735872206160170572247089632029450508360488}\\\texttt{3838990780280}\end{tabular} \\
37 & \begin{tabular}[t]{@{}l@{}}\texttt{717050864044352640732976683055676392704654911582065784578058}\\\texttt{3858385663594705}\end{tabular} \\
38 & \begin{tabular}[t]{@{}l@{}}\texttt{674758589928248584062426523344333443900935569857323195900301}\\\texttt{5073131753785087135}\end{tabular} \\
39 & \begin{tabular}[t]{@{}l@{}}\texttt{669142607035471282503067807638506540350652512913663389342784}\\\texttt{1539740916073979744185}\end{tabular} \\
40 & \begin{tabular}[t]{@{}l@{}}\texttt{698363239215059738245539752094272830050776679909764349632829}\\\texttt{1058410000012155962088891}\end{tabular} \\
41 & \begin{tabular}[t]{@{}l@{}}\texttt{766100463705513418289509463348743309395364027639323880461859}\\\texttt{0583829648033219052642534620}\end{tabular} \\
42 & \begin{tabular}[t]{@{}l@{}}\texttt{882282273153181050587842980294934328474074911127483823523757}\\\texttt{0532338773478370886237455063670}\end{tabular} \\
43 & \begin{tabular}[t]{@{}l@{}}\texttt{106548503924788074363035230440413823742057428889485399120486}\\\texttt{56013463742155160211803630489394990}\end{tabular} \\
44 & \begin{tabular}[t]{@{}l@{}}\texttt{134780986530494102461477716708860728684304231128344855401710}\\\texttt{92277281668055019288737379454463444690}\end{tabular} \\
45 & \begin{tabular}[t]{@{}l@{}}\texttt{178400606655402808635027847452596270382591797116985366991285}\\\texttt{36863129266008073886119166324430132289281}\end{tabular} \\
46 & \begin{tabular}[t]{@{}l@{}}\texttt{246839940426317257740990247401343710635834173714001805047760}\\\texttt{41882750926291040060492349176190339948158835}\end{tabular} \\
47 & \begin{tabular}[t]{@{}l@{}}\texttt{356672550418434757956415106780554116249786756963236525829491}\\\texttt{06998987632002300689390223954125671277135249035}\end{tabular} \\
48 & \begin{tabular}[t]{@{}l@{}}\texttt{537725206304142856901413691985529301431754133732733524131297}\\\texttt{47301758107575592331280641380275202281129189032395}\end{tabular} \\
49 & \begin{tabular}[t]{@{}l@{}}\texttt{845094630156391443416560961473494251019655971871030312651365}\\\texttt{20290279158256553149485563198292823829141082013713120}\end{tabular} \\
50 & \begin{tabular}[t]{@{}l@{}}\texttt{138336823347162818108015410281773931110899524325911679455625}\\\texttt{070087813910376506171032162681326294383945945551868904476}\end{tabular} \\
}

\title{Exact counting of unlabeled quartic graphs by permutation-cycle aggregation}
\author{Yue Cheng and Zhipeng Xu\thanks{Corresponding author: xuzhp@ntu.edu.cn}\\[0.4em]
  \small School of Mathematics and Statistics, Nantong University,\\
  \small Nantong, China\\
}
\date{}

\begin{document}
\maketitle

\begin{abstract}
The number of unlabeled regular graphs can be expressed as an average of fixed-point counts over vertex permutations, but evaluating each fixed-point count still requires the degree constraints to be enforced. We give an exact recurrence that processes one complete permutation cycle at a time and records the remaining cycles only by their lengths and residual degrees. The recurrence combines internal edge orbits with orbits joining distinct cycles, while binomial and multinomial coefficients retain the multiplicities of choices that lead to the same remaining state. We prove that this state description is sufficient under complete-cycle elimination and derive bounds on the number of states and transitions. For every fixed degree, the resulting algorithm has an $\exp(O(\sqrt n))$ upper bound in the number of vertices, including integer-arithmetic costs. The quartic case requires only four positive residual-degree classes for each cycle length. Small-instance comparisons with a separately implemented, vertex-indexed edge-orbit calculation verify both regular and nonuniform residual-degree inputs. The quartic calculation gives unrestricted and connected counts through order 50, including 22 orders beyond the corresponding reference tables through order 28. Connected counts are recovered by the inverse Euler transform, and all 22 identity-permutation contributions for orders 29--50 agree with the published labeled counts. The nonidentity fixed-point terms at these orders have not been independently recomputed.
\end{abstract}

\noindent\textbf{Keywords}\quad
regular graphs; exact counting; Burnside's lemma; permutation cycles; dynamic programming

\section{Introduction}
\label{sec:introduction}

Counting regular graphs up to isomorphism is different from constructing a representative of every isomorphism class. A generation algorithm must distinguish the graphs that it produces, whereas a counting algorithm may combine many choices whenever their remaining counting problems are identical. This distinction is particularly relevant when the required output is an integer table rather than a collection of adjacency matrices, because the number of graphs need not determine the number of states visited by an exact counting algorithm.

Regular graph enumeration belongs to the classical study of locally restricted graphs \citep{Read1959,Read1960}, and the use of permutation-group actions in unlabeled graph counting is developed systematically in \citet{HararyPalmer1973}. Constructive methods remain necessary when graph representatives or additional structural properties are required. Meringer's regular-graph generator uses orderly generation and canonicity criteria \citep{Meringer1999}, and parallel regular-graph enumeration has also been studied by \citet{Xu2019}. These approaches address a different output requirement from the count-only calculation considered here.

For labeled graphs, aggregation by degree already removes much of the redundancy in a direct search. Howroyd's publicly available program accumulates labeled graph counts by degree histograms, represents those histograms by polynomials, and uses binomial coefficients when selecting vertices from a degree class \citep{Howroyd2019}. Recursive counting for a prescribed labeled degree sequence is also considered by \citet{Kaygun2021}. Thus neither degree aggregation nor the reuse of repeated residual problems is, by itself, the distinction of the present method.

For an unlabeled count obtained from Burnside's lemma, the remaining problem is to count graphs fixed by a specified vertex permutation. Vertices on the same permutation cycle must have equal degrees in every invariant graph, and the available edges occur in entire orbits. A degree histogram alone does not describe these orbit constraints. We therefore aggregate complete permutation cycles jointly by cycle length and residual degree, and derive the transition weights from the edge orbits incident with one selected cycle.

The main result is an exact recurrence on these aggregated states, together with a proof of the condition under which the aggregation is valid. Complete-cycle elimination leaves every edge between the remaining active cycles undecided, so the number of completions depends only on the cycle-length and residual-degree histogram. We also bound the number of possible histograms by a colored-partition generating function and show that, for fixed degree, each state admits only polynomially many aggregated transitions. The analysis applies to any fixed degree, while the completed numerical tables concern quartic graphs of orders 29--50. These tables contain unrestricted and connected unlabeled counts, their disconnected differences, and the identity-permutation contributions, with exact decimal integers retained throughout.

The distinction from prior computational work must be stated at the level supported by the available sources. The OEIS entries for unrestricted and connected quartic graphs credit Howroyd with the corrected and extended values through order 28 \citep{OEISA033301,OEISA006820}, but those entries do not specify the complete algorithm that produced his unlabeled counts. The present comparison concerns his cited public labeled-graph program; it does not establish priority over an unpublished fixed-point implementation or a measured speed advantage over that implementation.

\section{Fixed graphs under a vertex permutation}
\label{sec:orbits}

\subsection{Burnside's formula}

All graphs in this paper are finite, simple, and undirected. Let $U_d(n)$ be the number of isomorphism classes of $d$-regular graphs on $n$ vertices, without a connectivity restriction, and set $U_d(0)=1$. For $n>0$, the count is zero when $d\geq n$ or $nd$ is odd. The symmetric group $S_n$ acts on labeled graphs by permuting their vertex labels.

Write a partition of $n$ as $\lambda=(1^{m_1}2^{m_2}\cdots n^{m_n})$ and define
\begin{equation}
 z_\lambda=\prod_{a=1}^n a^{m_a}m_a!.
 \label{eq:zlambda}
\end{equation}
If $F_d(\lambda)$ is the number of labeled $d$-regular graphs fixed by one permutation of cycle type $\lambda$, then the conjugacy-class form of Burnside's lemma gives \citep{HararyPalmer1973}
\begin{equation}
 U_d(n)=\sum_{\lambda\vdash n}\frac{F_d(\lambda)}{z_\lambda}.
 \label{eq:burnside}
\end{equation}
Although the terms of this sum are rational, the fixed-point counts themselves are nonnegative integers. An integer-only final accumulation is obtained from
\begin{equation}
 B_d(n)=\sum_{\lambda\vdash n}\frac{n!}{z_\lambda}F_d(\lambda),
 \qquad U_d(n)=\frac{B_d(n)}{n!}.
 \label{eq:integer-burnside}
\end{equation}

The quantity $F_d(\lambda)$ counts labeled edge sets, not edge sets modulo the centralizer of the chosen permutation. In particular, different choices of edge orbits remain distinct even when a rotation or exchange of equal-length cycles maps one choice to another. The averaging that removes vertex labels is performed only in \cref{eq:burnside}; no additional division by cycle rotations or cycle permutations is used inside a fixed-point calculation.

\subsection{Edge orbits within and between cycles}

Fix a permutation $\pi$ whose cycles have lengths $a_1,\ldots,a_t$. Its action on unordered pairs of distinct vertices partitions the possible edges into orbits, and a graph is fixed by $\pi$ precisely when its edge set is a union of those orbits. The degree contribution of an orbit is constant on each vertex cycle, which allows one variable per cycle to record the per-vertex degree contribution.

For a cycle of length $a$, an internal edge joins two positions at cyclic distance $j$ with $1\leq j\leq\lfloor a/2\rfloor$. Each distance $j<a/2$ gives one orbit contributing degree two at every vertex. If $a$ is even, distance $a/2$ instead gives a matching orbit contributing degree one. It follows that the polynomial for internal choices is
\begin{equation}
 P_a(x)=(1+x^2)^{\lfloor(a-1)/2\rfloor}(1+x)^{\one_{2\mid a}},
 \qquad I_a(u)=[x^u]P_a(x).
 \label{eq:internal}
\end{equation}
The conventions include $P_1(x)=1$ and $P_2(x)=1+x$, so fixed vertices introduce no loops and 2-cycles are treated without an exceptional rule in the recurrence.

\begin{lemma}
\label{lem:cross}
For two distinct vertex cycles of lengths $a$ and $b$, let $g=\gcd(a,b)$. Their $ab$ possible joining edges form $g$ orbits of length $\lcm(a,b)$. Every such orbit contributes $b/g$ edges at each vertex of the $a$-cycle and $a/g$ edges at each vertex of the $b$-cycle.
\end{lemma}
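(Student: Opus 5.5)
I will coordinatize the two cycles and reduce the orbit structure to elementary arithmetic in $\mathbb{Z}_a\times\mathbb{Z}_b$. Label the $a$-cycle as $u_0,\dots,u_{a-1}$ and the $b$-cycle as $v_0,\dots,v_{b-1}$, with $\pi(u_i)=u_{i+1 \bmod a}$ and $\pi(v_j)=v_{j+1 \bmod b}$. Because the two cycles are distinct, each joining edge $\{u_i,v_j\}$ is an unordered pair with one endpoint in each cycle. It can therefore be identified with the ordered pair $(i,j)\in\mathbb{Z}_a\times\mathbb{Z}_b$, and no pair is ever reversed. Under this identification, $\pi$ acts on the $ab$ joining edges as the translation $(i,j)\mapsto(i+1,j+1)$. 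The orbits are then exactly the cosets of the cyclic subgroup $H=\langle(1,1)\rangle$.

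First I will compute $|H|$. The element $k(1,1)=(k,k)$ is zero exactly when $a\mid k$ and $b\mid k$. Hence the order of $(1,1)$ is $\lcm(a,b)$, and every orbit has length $\lcm(a,b)$. Since the orbits partition the $ab$ joining edges, there are $ab/\lcm(a,b)=\gcd(a,b)=g$ of them. As a check, the map $(i,j)\mapsto (j-i)\bmod g$ is well defined on $\mathbb{Z}_a\times\mathbb{Z}_b$ because $g\mid a$ and $g\mid b$. It is constant on orbits and takes all $g$ values, which confirms the count independently.

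Next I will count the degree contribution of one orbit $O$. Since $\pi$ maps $O$ to itself and permutes the $a$-cycle transitively, every vertex of the $a$-cycle meets the same number $r$ of edges of $O$. Each edge of $O$ has exactly one endpoint in the $a$-cycle, so double counting gives $ar=|O|=\lcm(a,b)$. Thus $r=\lcm(a,b)/a=b/g$. The symmetric argument gives $a/g$ edges of $O$ at each vertex of the $b$-cycle.

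I expect no genuine obstacle here. The one point requiring care is to justify that the action on unordered pairs agrees with the translation action on ordered pairs. This holds precisely because the cycles are distinct, so no orbit can contain a reversed pair. That is also why the within-cycle case had to be treated separately in \eqref{eq:internal}.
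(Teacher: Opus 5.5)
Your proposal is correct and follows the paper's proof. Both identify the action on joining edges with the simultaneous shift $(i,j)\mapsto(i+1,j+1)$, get orbit length $\lcm(a,b)$ and hence $g$ orbits, and read off the per-vertex contributions $\lcm(a,b)/a=b/g$ and $\lcm(a,b)/b=a/g$; your transitivity-plus-double-counting step and your remark on ordered versus unordered pairs only make explicit what the paper leaves implicit.
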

\begin{proof}
Index the cycles by $\mathbb{Z}/a\mathbb{Z}$ and $\mathbb{Z}/b\mathbb{Z}$. The action on a joining edge is the simultaneous shift $(i,j)\mapsto(i+1,j+1)$, whose orbit length is $\lcm(a,b)$. Dividing $ab$ by this length gives $g$ orbits. Within one orbit, each position in the first cycle is visited $\lcm(a,b)/a=b/g$ times, and each position in the second is visited $a/g$ times.
\end{proof}

Selecting $k$ of these $g$ orbits can therefore be done in $\binom{g}{k}$ ways and contributes per-vertex degrees $kb/g$ and $ka/g$. With $g_{ij}=\gcd(a_i,a_j)$, the complete fixed-point count has the coefficient representation
\begin{equation}
 F_d(\lambda)=
 [x_1^d\cdots x_t^d]
 \left\{\prod_{i=1}^t P_{a_i}(x_i)
 \prod_{1\leq i<j\leq t}
 \left(1+x_i^{a_j/g_{ij}}x_j^{a_i/g_{ij}}\right)^{g_{ij}}\right\}.
 \label{eq:cycle-product}
\end{equation}
This expression gives a direct description of the count, but the recurrence below avoids maintaining a separate variable and residual degree for every individual cycle.

\section{The remaining-state description}
\label{sec:states}

The elimination procedure chooses one complete vertex cycle, decides every still-undecided edge orbit incident with that cycle, and then removes the cycle. Consequently, at the boundary between two elimination steps, all edge orbits entirely within the remaining active vertices are undecided. Earlier choices affect the remaining problem only through the degree already supplied to each remaining vertex.

Because earlier choices were unions of edge orbits, all vertices of a remaining cycle have the same residual degree. A cycle with residual degree zero can be removed immediately, since every still-undecided orbit incident with it is forced absent. This removal contributes a factor of one and is applied before states are compared or stored.

\begin{definition}
A normalized state is a finitely supported array
\begin{equation}
 H=(H_{a,r})_{a\geq1,\,1\leq r\leq d},
 \label{eq:histogram}
\end{equation}
where $H_{a,r}$ is the number of active permutation cycles of length $a$ whose vertices each require $r$ further incident edges. Let $\Phi(H)$ denote the number of invariant edge sets on one fixed labeled realization of those cycles that supply exactly these residual degrees. Define
\begin{equation}
 N(H)=\sum_{a,r}aH_{a,r},
 \qquad R(H)=\sum_{a,r}arH_{a,r},
 \qquad \Phi(0)=1.
 \label{eq:state-size}
\end{equation}
\end{definition}

\begin{theorem}[Sufficiency of the histogram]
\label{thm:sufficient}
Suppose that two remaining problems satisfy the complete-cycle elimination condition and have the same normalized histogram $H$. Their numbers of invariant completions are equal. Thus $\Phi(H)$ does not depend on the identities of the active cycles, on their chosen starting positions, or on the history of previously eliminated cycles.
\end{theorem}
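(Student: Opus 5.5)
The plan is to reduce the count of completions of a remaining problem to a coefficient extraction that depends only on the multiset of pairs $(a_i,r_i)$, namely cycle lengths and residual degrees. First I will state precisely what the complete-cycle elimination condition guarantees. At the boundary between steps, every edge orbit incident with an already eliminated cycle has been decided, and every edge orbit with both ends among the active cycles is undecided. Because each decided orbit contributes a constant amount on every cycle it touches (\cref{lem:cross} and the internal-orbit description behind \cref{eq:internal}), each active cycle $C_i$ of length $a_i$ has a well-defined residual degree $r_i\in\{0,\dots,d\}$. A completion is then a union of undecided orbits that meets each vertex of $C_i$ in exactly $r_i$ further edges. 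Such a completion uses only orbits inside the active vertex set $V'$, so the count equals the number of $\pi|_{V'}$-invariant edge sets on $V'$ with per-vertex degrees $r_i$ on $C_i$.

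Second, I will express this count as a coefficient. Repeating the derivation of \cref{eq:cycle-product} for the restricted permutation and residual degrees gives
\[
 [x_1^{r_1}\cdots x_s^{r_s}]\Bigl\{\prod_{i}P_{a_i}(x_i)\prod_{i<j}\bigl(1+x_i^{a_j/g_{ij}}x_j^{a_i/g_{ij}}\bigr)^{g_{ij}}\Bigr\}.
\]
This expression depends only on the list $(a_i,r_i)$ and not on vertex labels or starting positions. The starting positions do not matter because the orbit structure in \cref{lem:cross} and in \cref{eq:internal} is determined by cyclic distances and the simultaneous shift, both of which are unchanged by relabeling positions within a cycle. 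Cycles with $r_i=0$ can be dropped. Setting $x_i=0$ kills every factor containing $x_i$ except the constant term $1$, so the coefficient is unchanged after deleting that variable. This matches the normalization built into the definition of $H$.

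Third, I will show invariance under reordering the list. Permuting the indices $i$ permutes the variables together with the factors, because the pairwise factor is symmetric under swapping $(i,a_i)$ with $(j,a_j)$. The extracted coefficient is therefore invariant. It follows that the count depends only on the multiset of pairs $(a_i,r_i)$ with $r_i\ge1$, which is exactly $H$. Equivalently, I can give an explicit bijection: any two realizations with the same $H$ are related by a length-preserving bijection of cycles together with position shifts, and this bijection intertwines the two permutations and maps invariant completions to invariant completions. Either route identifies both counts with $\Phi(H)$.

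The main obstacle is the first step. I must argue carefully that "complete-cycle elimination" really leaves no partially decided orbit between active cycles, and that no decided orbit constrains the active part except through the degrees it has supplied. I will handle this by observing that each elimination step decides exactly the orbits incident with the removed cycle, and that every orbit lies within one cycle or between two cycles. Consequently an orbit between two active cycles has never been touched, and the residual degrees capture all of the history.
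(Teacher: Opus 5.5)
Your proof is correct, but your primary route differs from the paper's. The paper's proof is the bijection you mention only in passing at the end of your third step. It matches the active cycles of the two problems by length and residual degree and takes position-preserving bijections between matched cycles. The resulting map conjugates the two restricted permutations. Since every pair of active vertices is still undecided in both problems, this map carries orbits to orbits and invariant completions to invariant completions, and its inverse gives the converse.

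You instead identify the number of completions with the coefficient $[x_1^{r_1}\cdots x_s^{r_s}]$ of the active-cycle version of \cref{eq:cycle-product}. You then observe that this coefficient is a symmetric function of the pairs $(a_i,r_i)$.

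The paper's argument is lighter and more structural. It needs no orbit enumeration, only that relabeling commutes with the permutation, and it gives an explicit bijection between the two sets of completions rather than just equal counts. Your argument requires re-deriving \cref{eq:cycle-product} with nonuniform exponents. That step is routine, since \cref{lem:cross} and \cref{eq:internal} describe only the permutation's orbits and not the degree targets. In return it gains two things:
\begin{itemize}
\item It handles residual-zero cycles within the same argument through the substitution $x_i=0$. The paper's matching presupposes that both problems are already normalized, and it relies on the separate remark before the definition of $H$ that exhausted cycles contribute a factor of one.
\item It exhibits $\Phi(H)$ as an explicit coefficient. From that coefficient, \cref{thm:recurrence} can be read off by extracting the pivot's variable and grouping the remaining variables by class.
\end{itemize}
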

\begin{proof}
Match the active cycles of the two problems by their common length and residual degree. On each matched pair, choose any bijection that maps successive positions to successive positions. The union of these bijections conjugates the two restricted permutations and preserves every required residual degree.

Every unordered pair of distinct active vertices is still available in each problem, because no orbit entirely among active cycles has been decided. The bijection therefore maps edge orbits to edge orbits and sends every invariant completion of the first problem to an invariant completion of the second. Its inverse gives the reverse correspondence, proving equality of the counts. There is no need for the bijection to preserve edges incident with already eliminated vertices, since those edges affect a completion only through the residual degrees already matched.
\end{proof}

The theorem is a statement about the number of completions of one remaining labeled problem. It does not identify different labeled completions as the same graph, and it does not discard the multiplicity of earlier choices. When several earlier choices reach the same histogram, their weights multiply the same value $\Phi(H)$ and are added in the recurrence.

For a permutation of cycle type $\lambda$, the initial state is
\begin{equation}
 H^{\lambda,d}_{a,r}=m_a\one_{r=d},
 \qquad F_d(\lambda)=\Phi(H^{\lambda,d}),
 \label{eq:initial}
\end{equation}
with the zero-degree case handled by normalization. The elimination condition is essential. If some edges between active cycles have already been included, excluded, or otherwise restricted, two problems with the same histogram can have different sets of completions; an edge-by-edge procedure cannot generally use $H$ as its only state.

\section{An exact aggregated recurrence}
\label{sec:recurrence}

\subsection{Local choices and their multiplicities}

Let $H\neq0$ and choose a pair $(a,r)$ with $H_{a,r}>0$. One particular labeled cycle in that class is designated as the pivot, while its identity is omitted from the stored state. Remove that cycle from the histogram and write
\begin{equation}
 h_{b,s}=H_{b,s}-\one_{(b,s)=(a,r)}.
 \label{eq:pivot-removal}
\end{equation}
The pivot can be chosen by any deterministic rule depending only on $H$; choosing the lexicographically largest occupied pair $(a,r)$ is one such rule.

For a target class $(b,s)$, define
\begin{equation}
 g_{ab}=\gcd(a,b),\qquad
 \alpha_{ab}=\frac{b}{g_{ab}},\qquad
 \beta_{ab}=\frac{a}{g_{ab}},
 \label{eq:increments}
\end{equation}
and
\begin{equation}
 K_{a,r;b,s}=\min\left\{g_{ab},
 \left\lfloor\frac{r}{\alpha_{ab}}\right\rfloor,
 \left\lfloor\frac{s}{\beta_{ab}}\right\rfloor\right\}.
 \label{eq:kmax}
\end{equation}
If $k$ joining orbits are selected between the pivot and one target cycle, they consume $k\alpha_{ab}$ units of the pivot's per-vertex residual degree and $k\beta_{ab}$ units at each target vertex. The upper bound in \cref{eq:kmax} excludes degree overflow before an aggregated transition is formed.

Let $\nu_{b,s,k}$ be the number of target cycles in class $(b,s)$ that receive exactly $k$ joining orbits from the pivot. For each occupied target class, these nonnegative integers satisfy
\begin{equation}
 \sum_{k=0}^{K_{a,r;b,s}}\nu_{b,s,k}=h_{b,s}.
 \label{eq:class-conservation}
\end{equation}
The number of labeled choices represented by this allocation is
\begin{equation}
 W_{b,s}(\nu)=
 \frac{h_{b,s}!}{\prod_{k=0}^{K_{a,r;b,s}}\nu_{b,s,k}!}
 \prod_{k=0}^{K_{a,r;b,s}}
 \binom{g_{ab}}{k}^{\nu_{b,s,k}}.
 \label{eq:class-weight}
\end{equation}
The multinomial factor selects which labeled target cycles receive each orbit count, while the binomial factors select the actual joining orbits for those cycles.

\subsection{The recurrence and its proof}

Suppose that the pivot receives internal degree $u$, which has multiplicity $I_a(u)$ from \cref{eq:internal}. Let $\mathcal{A}(H,a,r,u)$ be the set of allocations satisfying \cref{eq:class-conservation} and the pivot-degree equation
\begin{equation}
 u+\sum_{b,s}\sum_{k=0}^{K_{a,r;b,s}}
 k\alpha_{ab}\nu_{b,s,k}=r.
 \label{eq:pivot-budget}
\end{equation}
The new normalized histogram is obtained by moving each target cycle to its resulting residual-degree class,
\begin{equation}
 H^{\nu}_{b,t}=
 \sum_{s=1}^d\sum_{k=0}^{K_{a,r;b,s}}
 \nu_{b,s,k}\one_{t=s-k\beta_{ab}},
 \qquad 1\leq t\leq d.
 \label{eq:next-state}
\end{equation}
Terms with $s-k\beta_{ab}=0$ do not appear in $H^\nu$, since those cycles are exhausted and removed.

\begin{theorem}[Aggregated fixed-point recurrence]
\label{thm:recurrence}
For every nonzero normalized state $H$ and any occupied pivot class $(a,r)$,
\begin{equation}
 \boxed{\displaystyle
 \Phi(H)=\sum_{u=0}^{r} I_a(u)
 \sum_{\nu\in\mathcal{A}(H,a,r,u)}
 \left(\prod_{b,s}W_{b,s}(\nu)\right)\Phi(H^\nu).}
 \label{eq:main-recurrence}
\end{equation}
Together with $\Phi(0)=1$, this recurrence counts every invariant completion exactly once.
\end{theorem}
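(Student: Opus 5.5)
Fix one labeled realization of the active cycles of $H$, with a designated pivot cycle $C$ of class $(a,r)$. The plan is to partition the invariant completions counted by $\Phi(H)$ according to their restriction to the edge orbits incident with $C$, and then show that the right-hand side of \cref{eq:main-recurrence} enumerates the blocks of this partition with the correct sizes. Every invariant edge set on the active vertices is a union of orbits. The orbits incident with $C$ are of two kinds: the internal orbits of $C$ from \cref{eq:internal}, and, for each other active cycle $D$ of length $b$, the $g_{ab}$ joining orbits of \cref{lem:cross}. A completion therefore determines three things: the internal subset chosen on $C$, which contributes per-vertex degree $u$; for each target cycle $D$, the number $k_D$ of joining orbits chosen together with which ones; and the restriction $E'$ to orbits among the remaining cycles. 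This decomposition is a bijection between invariant edge sets and such triples, because the three families of orbits are disjoint and exhaust all orbits on the active vertices.

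Next I would translate the degree constraint. The vertices of $C$ require $u+\sum_D k_D\alpha_{ab}=r$. A target $D$ of class $(b,s)$ requires $s-k_D\beta_{ab}$ further edges from $E'$; this forces $k_D\le K_{a,r;b,s}$, since a negative residual admits no completion. Grouping the targets by class and by value of $k_D$ yields the allocation $\nu$. The pivot condition then becomes exactly \cref{eq:pivot-budget}, so $\nu\in\mathcal{A}(H,a,r,u)$, and the residual demands on the remaining cycles are those described by $H^\nu$ in \cref{eq:next-state}. Targets with residual zero contribute no edges in $E'$, which is consistent with removing them. Hence, for fixed local choices, the admissible $E'$ are exactly the invariant completions of a remaining problem whose normalized histogram is $H^\nu$. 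All orbits among the remaining cycles are undecided, so the complete-cycle elimination condition holds, and \cref{thm:sufficient} gives that their number is $\Phi(H^\nu)$, independent of which labeled targets received which $k_D$.

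It remains to count the local choices that share a given pair $(u,\nu)$. The internal choices contribute $I_a(u)$. For each class $(b,s)$, first assign the $h_{b,s}$ labeled target cycles to the values $k$ with prescribed counts $\nu_{b,s,k}$; there are multinomially many such assignments. Then choose, independently for each target, which $k$ of its $g_{ab}$ orbits to use. The product of these counts is $W_{b,s}(\nu)$, and the classes are independent, so the counts multiply. Summing over $u$ and $\nu$ gives \cref{eq:main-recurrence}. Each completion is counted exactly once, since its triple determines $(u,\nu)$ uniquely and the enumeration is a bijection onto triples.

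Finally, I would argue termination and the base case. Every step removes the pivot cycle, so $N$ strictly decreases and the recursion reaches $\Phi(0)=1$, which counts the empty edge set. The main obstacle is the second step: I must check that the remaining problem genuinely satisfies the hypothesis of \cref{thm:sufficient}, namely that no orbit among the surviving cycles has been constrained and that all vertices of each cycle share one residual degree. I also need to confirm that normalization (dropping cycles with residual zero) does not change the count, since their incident undecided orbits are forced to be absent.
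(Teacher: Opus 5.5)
Your proposal is correct and follows the paper's proof essentially step for step. You decompose each completion by its internal and joining orbits at the fixed pivot, count the local choices for fixed $(u,\nu)$ as $I_a(u)$ times the multinomial and binomial factors, invoke \cref{thm:sufficient} for the residual problem $H^\nu$, and induct on the strictly decreasing $N(H)$. The bounds $k_D\le g_{ab}$ and $k_D\le\lfloor r/\alpha_{ab}\rfloor$, which you leave implicit, follow immediately from \cref{lem:cross} and the pivot budget.
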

\begin{proof}
Fix one labeled pivot cycle in the chosen class. Every invariant completion determines a unique subset of its internal edge orbits and a unique subset of the joining orbits between the pivot and each other active cycle. The internal subset determines $u$, while the joining subsets determine the orbit count $k$ for every target cycle and hence determine the allocation $\nu$.

For fixed $u$ and $\nu$, there are $I_a(u)$ internal choices. Within each target class, the number of assignments of the orbit counts to labeled cycles is the multinomial coefficient in \cref{eq:class-weight}. For every assigned target, \cref{lem:cross} gives $\binom{g_{ab}}{k}$ choices of joining orbits. These choices involve disjoint sets of possible edges, so their multiplicities multiply to the coefficient in \cref{eq:main-recurrence}.

Once those choices are fixed, all remaining undecided edges lie entirely among the nonexhausted target cycles. Their residual degrees are exactly those recorded by $H^\nu$, and \cref{thm:sufficient} gives $\Phi(H^\nu)$ remaining choices. Conversely, any local choice counted by the coefficient and any completion counted by $\Phi(H^\nu)$ combine into one invariant completion of $H$. The local choices can be recovered from that completion, so the construction neither loses nor duplicates an edge set. Finally, $N(H^\nu)\leq N(H)-a$, which makes induction from the empty state valid.
\end{proof}

There is no factor $H_{a,r}$ for selecting the pivot. The pivot is a fixed representative used to decompose a labeled counting problem, not an additional part of the output being counted. Multiplying by the number of available pivots would count the same edge set repeatedly; the labeled multiplicities that are required occur in the choices of target cycles in \cref{eq:class-weight}.

For $d=4$, the internal contribution $u$ takes only the values $0,1,2,3,4$, and each positive joining choice consumes at least one unit of the pivot's residual degree. Consequently, at most four target cycles can receive joining edges in any transition. This restriction concerns the number of affected cycles, not the number of original edges, since a selected joining orbit can contain many edges.

\subsection{Evaluation and admissibility checks}

A memoized evaluation stores one integer for each normalized histogram and reuses it whenever the same remaining problem occurs. The recurrence uses only exact integer additions and multiplications, while the division in Burnside's formula is postponed until all class contributions have been accumulated. A high-level evaluation is given in \cref{alg:count}; no graph representatives or isomorphism comparisons are required inside this calculation.

\begin{algorithm}[tb]
\caption{Fixed-point count for a normalized cycle histogram}
\label{alg:count}
\begin{algorithmic}[1]
\Function{Count}{$H$}
  \State Remove residual-degree-zero cycles and canonicalize $H$
  \If{$H=0$} \State \Return $1$ \EndIf
  \If{$H$ has a stored value} \State \Return the stored value \EndIf
  \If{$R(H)$ is odd or some occupied $(a,r)$ has $r>N(H)-1$}
    \State Store and \Return $0$
  \EndIf
  \State Choose an occupied pivot class $(a,r)$ and set $V\gets0$
  \For{$u=0,\ldots,r$ with $I_a(u)>0$}
    \For{$\nu\in\mathcal{A}(H,a,r,u)$}
      \State $V\gets V+I_a(u)\bigl(\prod_{b,s}W_{b,s}(\nu)\bigr)
      \Call{Count}{H^\nu}$
    \EndFor
  \EndFor
  \State Store $V$ at $H$ and \Return $V$
\EndFunction
\end{algorithmic}
\end{algorithm}

Two inexpensive necessary conditions follow directly from the meaning of a remaining state. The total residual degree $R(H)$ must be even, and each residual degree must be at most $N(H)-1$, because the future graph is simple and is supported only on the active vertices. The factors $a$ in $R(H)$ are essential. The unweighted sum $\sum_{a,r}rH_{a,r}$ counts demand per cycle rather than per vertex and is not a valid parity criterion.

A stronger local rejection rule is available before allocations are expanded. After removing a proposed pivot, form the polynomial
\begin{equation}
 Q_{H,a,r}(x)=P_a(x)
 \prod_{b,s}\left(
 \sum_{k=0}^{\min\{g_{ab},\lfloor s/\beta_{ab}\rfloor\}}
 \binom{g_{ab}}{k}x^{k\alpha_{ab}}
 \right)^{h_{b,s}},
 \label{eq:local-feasibility}
\end{equation}
truncated after degree $r$. If $[x^r]Q_{H,a,r}(x)=0$, then the pivot cannot be completed and $\Phi(H)=0$. A positive coefficient is only a necessary local feasibility condition, since the remaining targets may still fail to complete one another. This optional rejection rule does not change the recurrence or its proof.

\section{State and operation bounds}
\label{sec:complexity}

The number of completed graphs, the number of stored histograms, and the number of local allocation steps measure different aspects of the calculation. A small cache does not by itself imply a correspondingly small total operation count, because several allocations may lead to the same stored state. The following bounds account separately for the possible histograms and for the number of aggregated transitions leaving one histogram.

For a fixed initial partition $\lambda=(1^{m_1}2^{m_2}\cdots)$, each cycle of length $a$ is either absent from the active state or belongs to one of the $d$ positive residual-degree classes. Therefore the number of potentially stored states for that partition is at most
\begin{equation}
 S_d(\lambda)\leq\prod_{a:m_a>0}\binom{m_a+d}{d}.
 \label{eq:fixed-partition-bound}
\end{equation}
The bound ignores feasibility restrictions and correlations between residual classes. For the identity permutation it reduces to $\binom{n+d}{d}$, which is polynomial in $n$ when $d$ is fixed; thus the identity term should not be assumed to be the most difficult fixed-point calculation merely because it fixes the largest number of graphs.

\begin{proposition}[Global state bound]
\label{prop:states}
Let $\mathcal{H}_d(n)$ be the set of normalized histograms with $N(H)\leq n$, for a fixed integer $d\geq1$. Then
\begin{equation}
 |\mathcal{H}_d(n)|=
 \sum_{m=0}^{n}[x^m]\prod_{a\geq1}(1-x^a)^{-d}
 \leq \exp\left(\pi\sqrt{\frac{2dn}{3}}\right)
 \label{eq:global-state-bound}
\end{equation}
for $n\geq1$.
\end{proposition}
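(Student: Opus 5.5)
The plan has two steps: prove the equality by a bijection with $d$-colored partitions, then bound the coefficient sum by a saddle-point (Rankin) argument.

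\emph{Equality.} A normalized histogram $H$ is a finitely supported array of nonnegative integers $H_{a,r}$ with $a\geq1$ and $1\leq r\leq d$. It is determined by choosing, for each length $a$, a $d$-tuple $(H_{a,1},\ldots,H_{a,d})$, and it has weight $N(H)=\sum_{a,r}aH_{a,r}$. The generating function $\sum_H x^{N(H)}$ therefore factorizes as a product over pairs $(a,r)$ of geometric series $\sum_{j\geq0}x^{aj}=(1-x^a)^{-1}$. This gives $\prod_{a\geq1}(1-x^a)^{-d}$. Summing the coefficients of $x^m$ over $0\leq m\leq n$ counts exactly the histograms with $N(H)\leq n$, so the equality in \cref{eq:global-state-bound} holds. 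Equivalently, $|\mathcal{H}_d(n)|$ is the number of $d$-colored partitions of integers $m\leq n$.

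\emph{Inequality.} Write $c_m=[x^m]G(x)$ with $G(x)=\prod_{a\geq1}(1-x^a)^{-d}$, and let $0<x<1$. Since all $c_m\geq0$,
\[
\sum_{m\leq n}c_m\leq \sum_{m\leq n}c_m x^{m-n}\leq x^{-n}G(x).
\]
Set $x=e^{-t}$ with $t>0$. Then
\[
\log G(e^{-t})=d\sum_{a\geq1}\sum_{j\geq1}\frac{e^{-taj}}{j}=d\sum_{j\geq1}\frac{1}{j}\cdot\frac{1}{e^{tj}-1}.
\]
Using $e^{y}-1\geq y$, this is at most $d\sum_j \frac{1}{tj^2}=\frac{d\pi^2}{6t}$. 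Hence
\[
|\mathcal{H}_d(n)|\leq \exp\!\left(nt+\frac{d\pi^2}{6t}\right).
\]
Minimizing over $t$ gives $t=\pi\sqrt{d/(6n)}$, and the exponent becomes $2\sqrt{nd\pi^2/6}=\pi\sqrt{2dn/3}$. This is exactly the stated bound.

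The main point needing care is the bound $\log G(e^{-t})\leq d\pi^2/(6t)$ without any lower-order correction. The elementary inequality $1/(e^y-1)\leq 1/y$ makes it hold uniformly for all $t>0$, so no asymptotic error term enters. The remaining steps are routine: nonnegativity of the $c_m$ justifies the Rankin-type estimate, and $n\geq1$ ensures the optimizing $t$ is finite and positive. I expect no further obstacle, since the rest is the standard Hardy–Ramanujan style upper bound.
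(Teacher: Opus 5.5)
Your proposal is correct and follows the paper's proof essentially step for step. You use the same factorization of the histogram generating function into $d$ geometric series per cycle length, the same Rankin-type bound $|\mathcal{H}_d(n)|\leq e^{tn}\prod_{a\geq1}(1-e^{-ta})^{-d}$, the same estimate via $e^{y}-1\geq y$ giving $d\pi^2/(6t)$, and the same choice $t=\pi\sqrt{d/(6n)}$, only spelling out some justifications (such as $x^{m-n}\geq1$ for $m\leq n$) more explicitly than the paper does.
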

\begin{proof}
A cycle of length $a$ can occur with any nonnegative multiplicity in each of the $d$ positive residual-degree classes, so its contribution to the histogram generating function is $(1-x^a)^{-d}$. Multiplying over lengths and summing the coefficients through degree $n$ gives the equality in \cref{eq:global-state-bound}.

For $t>0$, nonnegativity of the coefficients gives
\begin{align}
 |\mathcal{H}_d(n)|
 &\leq e^{tn}\prod_{a\geq1}(1-e^{-ta})^{-d},\\
 \log\prod_{a\geq1}(1-e^{-ta})^{-d}
 &=d\sum_{k\geq1}\frac{1}{k(e^{tk}-1)}
 \leq\frac{d}{t}\sum_{k\geq1}\frac{1}{k^2}
 =\frac{d\pi^2}{6t}.
\end{align}
Using $e^{tk}-1\geq tk$ and then setting $t=\pi\sqrt{d/(6n)}$ proves the stated inequality.
\end{proof}

\begin{lemma}[Transitions from one state]
\label{lem:transitions}
For fixed $d$, a state with at most $n$ active vertices has $O_d(n^d)$ aggregated transitions in \cref{eq:main-recurrence}. Those transitions and their integer weights can be generated with polynomial overhead in $n$ per transition.
\end{lemma}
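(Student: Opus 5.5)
The plan is to bound the number of pairs $(u,\nu)$ with $\nu\in\mathcal{A}(H,a,r,u)$, since these index the summands of \cref{eq:main-recurrence}. Fix the pivot class $(a,r)$ and $u\in\{0,\dots,r\}$; there are at most $d+1$ values of $u$. The key observation, already used for $d=4$ after \cref{thm:recurrence}, is that every positive joining choice costs the pivot at least $\alpha_{ab}\geq1$ units of residual degree. Hence \cref{eq:pivot-budget} forces
\[
\sum_{b,s}\sum_{k\geq1}k\,\nu_{b,s,k}\leq r-u\leq d .
\]
So at most $d$ target cycles receive a positive number of orbits. An allocation $\nu$ is then determined by the values $\nu_{b,s,k}$ with $k\geq1$, because $\nu_{b,s,0}$ is forced by \cref{eq:class-conservation}. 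Equivalently, $\nu$ is a multiset of at most $d$ triples $(b,s,k)$ with $H_{b,s}>0$ (or $h_{b,s}>0$), $1\le s\le d$ and $1\le k\le d$. Each triple must also respect the multiplicity bound $\nu_{b,s,k}\le h_{b,s}$, but dropping that constraint only enlarges the count.

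To count the triples, use $N(H)\le n$: occupied lengths satisfy $b\le n$, so there are at most $n\cdot d\cdot d$ possible triples. The number of multisets of size at most $d$ drawn from $d^2n$ types is
\[
\sum_{j=0}^{d}\binom{d^2n+j-1}{j}=O_d(n^d).
\]
Multiplying by the $d+1$ choices of $u$ gives $O_d(n^d)$ transitions. This step is mildly delicate only in checking that \cref{eq:class-conservation} leaves no further freedom. One could sharpen the count by noting that only the distinct occupied lengths matter, of which there are $O(\sqrt n)$, but the crude bound suffices.

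For the generation claim, enumerate these multisets directly, for instance by recursive descent over triples in lexicographic order with the remaining budget $r-u$, and reject any multiset that violates \cref{eq:pivot-budget}, \cref{eq:kmax} or $\nu_{b,s,k}\le h_{b,s}$. This costs $O_d(1)$ bookkeeping per candidate, plus the scanning of occupied classes, which is $O(n)$. For each surviving $\nu$, the state $H^\nu$ from \cref{eq:next-state} changes only $O_d(1)$ entries of $h$, and canonicalizing or hashing it costs time polynomial in $n$, since the support of $H$ has size at most $dn$.

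The weight is $I_a(u)\prod W_{b,s}(\nu)$, where $I_a(u)$ is a binomial coefficient in $\lfloor(a-1)/2\rfloor\le n$. Since all but $O_d(1)$ of the $\nu_{b,s,k}$ are zero, each multinomial in \cref{eq:class-weight} reduces to
\[
\frac{h_{b,s}!}{\nu_{b,s,0}!\prod_{k\ge1}\nu_{b,s,k}!},
\]
which is a product of at most $d$ falling-factorial factors of size at most $n$, divided by small factorials. The factors $\binom{g_{ab}}{k}^{\nu}$ have $g_{ab}\le n$ and $k,\nu\le d$. All these integers therefore have $O_d(\log n)$ bits, and the weight is obtained from $O_d(1)$ multiplications of polynomial-size integers, which is polynomial overhead per transition.

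The main obstacle I expect is stating the cost model cleanly. The product $\prod_{b,s}W_{b,s}$ formally runs over all occupied classes, possibly $\Theta(dn)$ of them. One must argue that the classes with no positive $\nu$ contribute weight $1$, since then $\nu_{b,s,0}=h_{b,s}$, so they need not be multiplied. The remaining work is enumerating the occupied classes once per state, which is what the polynomial overhead absorbs.
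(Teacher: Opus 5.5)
Your proposal is correct and follows essentially the same route as the paper. The pivot-degree equation with $\alpha_{ab}\geq1$ bounds the sum of the positive coordinates $\nu_{b,s,k}$ ($k\geq1$) by $d$; a stars-and-bars count over $O_d(n)$ coordinate types then gives $O_d(n^d)$, class conservation fixes the $\nu_{b,s,0}$, and the choice of $u$ contributes a factor $d+1$. Your extra detail on the weights (untouched classes have $W_{b,s}=1$, and each multinomial reduces to a falling factorial of length at most $d$ over small factorials) makes explicit what the paper only summarizes as ``polynomial work''.
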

\begin{proof}
There are at most $n$ occupied target classes, and $k\leq d$ for every positive joining choice because $k\alpha_{ab}\leq r\leq d$. Thus the allocation has at most $dn$ possible positive coordinates $\nu_{b,s,k}$ with $k\geq1$. Equation~\eqref{eq:pivot-budget} implies
\begin{equation}
 \sum_{b,s}\sum_{k\geq1}\nu_{b,s,k}\leq d.
\end{equation}
The number of nonnegative vectors on at most $dn$ coordinates with total at most $d$ is at most $\binom{dn+d}{d}=O_d(n^d)$. Once the positive coordinates are fixed, $\nu_{b,s,0}$ is determined by \cref{eq:class-conservation}, and there are at most $d+1$ internal-degree choices. Enumerating these bounded-support vectors and checking the class capacities and degree equation gives the bound, while the updated histogram and combinatorial weights require only polynomial work.
\end{proof}

\begin{theorem}[Fixed-degree complexity]
\label{thm:complexity}
For each fixed $d\geq1$, \cref{eq:main-recurrence,eq:integer-burnside} give an exact algorithm for $U_d(n)$ with running time $\exp(O_d(\sqrt n))$ and storage $\exp(O_d(\sqrt n))$, measured in bit operations and bits respectively. With a common cache over all partitions, the number of stored fixed-point values is bounded by \cref{eq:global-state-bound}.
\end{theorem}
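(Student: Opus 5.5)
The plan is to assemble the algorithm from three already established facts: the number of states is subexponential (\cref{prop:states}), each state has polynomially many transitions (\cref{lem:transitions}), and the recurrence is correct (\cref{thm:recurrence}). To these we add a separate bound on the bit length of every integer that appears.

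\emph{Algorithm.} Enumerate all partitions $\lambda\vdash n$. There are $p(n)\leq\exp(\pi\sqrt{2n/3})$ of them, and each can be generated with polynomial overhead. For each $\lambda$, form the initial histogram \cref{eq:initial}. Evaluate $\Phi$ with \cref{alg:count}, using one memo table shared across all partitions. Correctness of each value is exactly \cref{thm:recurrence}, together with \cref{thm:sufficient}, which justifies using one shared cache. Every histogram reached from an initial state has $N(H)\leq n$, so it lies in $\mathcal{H}_d(n)$. Hence the number of stored values is at most the bound in \cref{eq:global-state-bound}, which gives the last sentence of the theorem.

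Finally, accumulate $B_d(n)$ from \cref{eq:integer-burnside} using the integers $n!/z_\lambda$ and perform one exact division by $n!$.

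\emph{Bit sizes.} Every $\Phi(H)$ with $N(H)\leq n$ counts subsets of the $\binom{n}{2}$ possible edges among the active vertices. It is therefore at most $2^{n^2}$ and has $O(n^2)$ bits. The transition weights $W_{b,s}(\nu)$ are multiplicities of distinct local edge choices, so they are also at most $2^{n^2}$, and so is $I_a(u)\leq 2^{a}$. The Burnside terms satisfy $n!F_d(\lambda)/z_\lambda\leq n!\,2^{n^2}$, and $B_d(n)$ is a sum of at most $p(n)$ of them. All integers involved therefore have $\mathrm{poly}(n)$ bits, and each addition, multiplication, or exact division costs $\mathrm{poly}(n)$ bit operations, for instance by schoolbook arithmetic.

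\emph{Time and storage.} Each state is expanded at most once. By \cref{lem:transitions}, expanding it produces $O_d(n^d)$ transitions, each generated with polynomial overhead. Each transition requires:
\begin{itemize}
\item a polynomial-size weight computation;
\item canonicalization of the successor histogram;
\item a dictionary lookup keyed on a histogram of size $O(dn\log n)$ bits, which can be implemented with a balanced search tree at cost $\mathrm{poly}(n)\cdot O(\log|\mathcal{H}_d(n)|)=\mathrm{poly}(n)$ bit operations.
\end{itemize}
The total time is therefore
\[
|\mathcal{H}_d(n)|\cdot \mathrm{poly}_d(n)+p(n)\,\mathrm{poly}(n)=\exp(O_d(\sqrt n)),
\]
because polynomial factors are absorbed into $\exp(O_d(\sqrt n))$. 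Storage is $|\mathcal{H}_d(n)|$ entries of $\mathrm{poly}(n)$ bits each, plus a recursion stack of depth at most $n$ (since $N$ strictly decreases by \cref{thm:recurrence}) with polynomial-size frames. This is again $\exp(O_d(\sqrt n))$.

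The main obstacle I expect is making the bit-size accounting rigorous without circularity. In particular, one must check that the weights in \cref{eq:class-weight} are bounded by the number of local edge choices they represent, rather than merely by a product of large factorials. Even the crude bound $h!\,2^{dn}\leq n^n 2^{dn}$ gives polynomially many bits, so this step should close either way. I would state that crude bound explicitly.
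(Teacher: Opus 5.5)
Your proposal is correct and follows essentially the same route as the paper: the shared cache justified by \cref{thm:sufficient}, the colored-partition state bound of \cref{prop:states}, the per-state transition bound of \cref{lem:transitions}, its one-color case for the number of partitions, and a polynomial bit-length bound on every integer. The only real difference is that you bound every count crudely by $2^{\binom n2}$, giving $O(n^2)$ bits, whereas the paper uses that a completion has at most $dn/2$ edges to get $O_d(n\log(n+1))$ bits; either bound suffices, and your explicit treatment of dictionary lookups and recursion depth only fills in details the paper leaves implicit.
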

\begin{proof}
Every state reached from a partition of $n$ lies in $\mathcal{H}_d(n)$, and \cref{thm:sufficient} permits the same cached value to be reused across initial partitions. By \cref{lem:transitions}, the total number of arithmetic operations is bounded by a polynomial in $n$ times $|\mathcal{H}_d(n)|$, apart from partition generation and the final accumulation. Ordinary partitions are the one-color case of \cref{prop:states}, so there are also at most $\exp(O(\sqrt n))$ initial partitions to process.

For any remaining state, a completion has at most $dn/2$ edges. Its count is bounded by
\begin{equation}
 \sum_{e=0}^{\min\{\lfloor dn/2\rfloor,\binom n2\}}
 \binom{\binom n2}{e},
 \label{eq:count-bit-bound}
\end{equation}
whose bit length is $O_d(n\log(n+1))$. Local weights have polynomial bit length as well, and the class-size factors $n!/z_\lambda$ and the final integer sum add only $O(n\log(n+1))$ bits. Integer additions, multiplications, and exact divisions therefore incur polynomial overhead, which preserves the $\exp(O_d(\sqrt n))$ bound. A state key has polynomial length, so the same argument applies to storage.
\end{proof}

Separate caches for different partition tasks can reduce communication and permit independent scheduling, but they may evaluate the same histogram more than once. Multiplying the preceding bound by the number of partitions still gives $\exp(O_d(\sqrt n))$ time, although it changes the constant in the exponent and the practical amount of reuse. The bound is in the vertex count $n$ for fixed $d$; it does not imply polynomial time in $n$, and it does not provide a uniform fixed-degree estimate when $d$ grows with $n$.

\section{Recovering connected counts}
\label{sec:connected}

Let $C_d(n)$ count connected unlabeled $d$-regular graphs for $n\geq1$. Every graph counted by $U_d(n)$ is a unique multiset of connected graphs of the same degree, so the ordinary generating functions satisfy the Euler product \citep{HararyPalmer1973,OEISA033301,OEISA006820}
\begin{equation}
 A_d(x)=\sum_{n\geq0}U_d(n)x^n
       =\prod_{j\geq1}(1-x^j)^{-C_d(j)}.
 \label{eq:euler}
\end{equation}
This identity avoids storing component information during the fixed-point calculation, where such information would generally invalidate the histogram sufficiency argument.

Define $b_d(n)$ by $xA_d'(x)/A_d(x)=\sum_{n\geq1}b_d(n)x^n$. Taking the logarithmic derivative of \cref{eq:euler} gives the two triangular recurrences
\begin{align}
 b_d(n)&=nU_d(n)-\sum_{j=1}^{n-1}b_d(j)U_d(n-j),
 \label{eq:b-recurrence}\\
 C_d(n)&=\frac{1}{n}\left(b_d(n)-
 \sum_{\substack{j\mid n\\j<n}}jC_d(j)\right).
 \label{eq:connected-recurrence}
\end{align}
All divisions in the second recurrence must be exact for valid input counts. Reconstructing $U_d(n)$ from the resulting connected counts is a useful arithmetic check, but it is not an independent verification of the original fixed-point calculation.

A nonempty connected $d$-regular simple graph has at least $d+1$ vertices. Hence, for $n\geq2(d+1)$, the disconnected count can also be obtained from previously known connected counts as
\begin{equation}
 U_d(n)-C_d(n)
 =[x^n]\prod_{j=d+1}^{n-d-1}(1-x^j)^{-C_d(j)}.
 \label{eq:disconnected-short}
\end{equation}
In particular, the disconnected quartic count at order 29 depends only on connected counts through order 24. The connected sequence here is indexed from $n=1$; the special value assigned to the empty graph in OEIS A006820 is not used as a component in \cref{eq:euler}.

\section{Computational verification and measurements}
\label{sec:experiments}

\subsection{Independent small-instance verification}

The algebraic recurrence was checked against a separately implemented edge-orbit calculation that retains one degree coordinate for every labeled vertex. In that calculation, the permutation acts explicitly on unordered vertex pairs, and each orbit is generated by repeated application of the permutation rather than by the greatest-common-divisor formulas. If $\delta_v(O)$ is the number of edges of orbit $O$ incident with vertex $v$, the independent count for target degrees $\rho_v$ is
\begin{equation}
 [\prod_v x_v^{\rho_v}]
 \prod_{O}\left(1+\prod_v x_v^{\delta_v(O)}\right).
 \label{eq:vertex-verifier}
\end{equation}
The product is evaluated with exact coefficients, discarding a monomial as soon as it exceeds a target degree in any coordinate.

The comparison covered every partition of $n$ for $0\leq n\leq7$ and every degree $0\leq d\leq4$, giving 225 regular fixed-point comparisons. To check the remaining-state recurrence rather than only regular starting states, every partition with $1\leq n\leq5$ was also evaluated with every assignment of residual degrees in $\{0,1,2\}$ to its cycles. These 582 additional comparisons include exhausted cycles, unequal demands, and infeasible states. All 807 comparisons agreed exactly, as summarized in \cref{tab:verification}.

\begin{table}[tb]
\centering
\caption{Completed checks of the recurrence in this manuscript. The first two rows compare different counting formulations. The last row checks an algebraic identity within the aggregated formulation.}
\label{tab:verification}
\small
\begin{tabularx}{\textwidth}{@{}Xrr@{}}
\toprule
Comparison and input range & Cases & Mismatches\\
\midrule
Vertex-orbit product, all partitions of $0\leq n\leq7$, $0\leq d\leq4$ & 225 & 0\\
Vertex-orbit product, all partitions of $1\leq n\leq5$, all cycle demands in $\{0,1,2\}$ & 582 & 0\\
Complement identity, all partitions of $1\leq n\leq9$, $0\leq d<n$ & 686 & 0\\
\bottomrule
\end{tabularx}
\end{table}

Complementation gives a further exact identity for every permutation type,
\begin{equation}
 F_d(\lambda)=F_{n-1-d}(\lambda),
 \label{eq:complement}
\end{equation}
since complementation commutes with vertex relabeling. A total of 686 checks of this identity were completed for the ranges in \cref{tab:verification}. These checks and the independent comparisons concern a reference implementation of the displayed recurrence; they do not, by themselves, certify that a separately maintained large-order implementation follows the same recurrence in every detail.

\subsection{Quartic counts through order 50}

The completed calculation supplies 22 rows for $29\leq n\leq50$, with unrestricted counts $U_4(n)$, connected counts $C_4(n)$, disconnected counts $D_4(n)=U_4(n)-C_4(n)$, and labeled counts $L_4(n)=F_4(1^n)$. These quantities correspond respectively to OEIS A033301, A006820, A033483, and A005815 \citep{OEISA033301,OEISA006820,OEISA033483,OEISA005815}. The connected values are obtained from the unrestricted values by \cref{eq:b-recurrence,eq:connected-recurrence}; neither the connected nor the disconnected column represents a separate large-order fixed-point computation.

At the reference snapshot of \ReferenceSnapshot, the unrestricted and connected OEIS tables end at order 28, whereas the disconnected table ends at order 33 and the labeled table extends through order 260. The present unrestricted and connected tables therefore give 22 additional orders beyond those two reference tables, while the disconnected table gives 17 additional orders beyond its reference table. The labeled values are existing reference values used for comparison rather than a new sequence extension, as summarized in \cref{tab:coverage}.

\begin{table}[tb]
\centering
\caption{Coverage of the supplied quartic results and the reference tables at \ReferenceSnapshot. Every computed column in this manuscript covers $n=29,\ldots,50$. The final column refers to additional orders beyond the cited reference snapshot, not to OEIS acceptance or to an independent recomputation of the unrestricted totals.}
\label{tab:coverage}
\small
\begin{tabularx}{\textwidth}{@{}llrX@{}}
\toprule
Quantity & OEIS entry & Reference endpoint & Role of the supplied values\\
\midrule
$U_4(n)$ & A033301 & 28 & 22 additional orders, $29$--$50$\\
$C_4(n)$ & A006820 & 28 & 22 additional orders, $29$--$50$\\
$D_4(n)$ & A033483 & 33 & Agreement at $29$--$33$; 17 additional orders, $34$--$50$\\
$L_4(n)$ & A005815 & 260 & Agreement at all 22 orders, $29$--$50$\\
\bottomrule
\end{tabularx}
\end{table}

All 88 supplied integers are reproduced in \cref{tab:unrestricted,tab:connected,tab:disconnected,tab:labeled}. At the largest completed order, the unrestricted and connected totals each have 52 decimal digits and are
\begin{align}
 U_4(50)&=\TotalAtFifty,\\
 C_4(50)&=\ConnectedAtFifty,
\end{align}
with disconnected difference
\begin{equation}
 D_4(50)=\DisconnectedAtFifty.
\end{equation}
The largest labeled value has 117 digits and is printed in full in \cref{tab:labeled}. Decimal line breaks in that table do not truncate or round the integers.

\subsection{Arithmetic reconciliation and reference comparisons}

The supplied summary, the two comma-separated tables, and the four indexed sequence files were reconciled using integer arithmetic, and all eight supplied SHA-256 checksums matched. The full table contains each order from 29 to 50 exactly once, and all 88 numerical entries agree across its alternative representations. 

The lower unrestricted coefficients for $0\leq n\leq28$ were taken from the cited OEIS snapshot and combined with the supplied values at $29\leq n\leq50$. Reapplying the inverse Euler transform reproduced every supplied connected value, with exact division at every positive order through 50. A separate forward calculation formed the truncated product in \cref{eq:euler} using the factors
\begin{equation}
 (1-x^j)^{-C_4(j)}
 =\sum_{k\geq0}\binom{C_4(j)+k-1}{k}x^{jk}
\end{equation}
for positive $C_4(j)$, while zero exponents contributed one. This product recovered all 51 unrestricted coefficients at orders 0--50, including the empty-graph coefficient. The empty graph was not treated as a connected component.

All 22 supplied labeled values agree exactly with the corresponding entries of the published A005815 table. They also agree with a separate exact-rational power-series evaluation of the differential equation for the labeled exponential generating function given in that entry \citep{OEISA005815}. The five supplied disconnected values at orders 29--33 agree with A033483 \citep{OEISA033483}. Since each nonempty quartic component has at least five vertices, these five disconnected values depend only on connected counts through order 28 and provide a check based on previously available lower-order data.

The large-order material supplied for this revision contains final totals and identity contributions, but not the full collection of records $(\lambda,z_\lambda,F_4(\lambda))$. Consequently, partition completeness, per-partition outputs, and the divisibility of the original accumulated Burnside numerators were not rechecked in this reconciliation. The Euler relations and the identity comparisons do not certify the nonidentity contributions, and multiplying a supplied total by $n!$ would not constitute a check of the original numerator.

An independent large-order audit can retain every partition record and compare selected nonidentity terms using \cref{eq:vertex-verifier}, with the selection covering fixed vertices, repeated short cycles, even cycles, and unequal cycle lengths. The numerical extension reported here is a computed result accompanied by the checks in \cref{tab:verification}, rather than a claim that the extended unrestricted totals have already been independently verified or incorporated into the reference database.

\subsection{Preliminary resource measurements}

The available resource summary concerns an earlier preliminary run at $n=29$, which used 4,565 partition tasks with up to 56 allocated CPU cores; no per-order resource measurements for the completed extension through $n=50$ accompany the supplied result tables. Its reported timing and memory summary is reproduced in \cref{tab:preliminary}, separately from the small-instance reference calculation above. The processor model, software versions, repetition protocol, and correspondence between the implemented pivot and pruning rules and those described here still require completion before these figures are used for a comparative performance claim.

\begin{table}[tb]
\centering
\caption{Reported preliminary resource summary for the quartic implementation at $n=29$. These values come from the supplied run summary and were not remeasured as part of the independent small-instance verification.}
\label{tab:preliminary}
\begin{tabularx}{\textwidth}{@{}Xr@{}}
\toprule
Quantity & Recorded value\\
\midrule
Partition tasks & 4,565\\
Allocated CPU cores & 56\\
Sum of GP counting CPU times & 272.984 s\\
Elapsed time from first task start to last task finish & 106 s\\
Largest individual partition counting CPU time & 2.511 s\\
Largest cache size in one partition task & 7,227 states\\
Largest reported single-process resident set size & 11,164 KiB\\
\bottomrule
\end{tabularx}
\end{table}

The summed counting CPU time and the task-span elapsed time have different definitions. The latter can include process startup, scheduling gaps, and intervals not covered by the counting timer, so their ratio is not a parallel speedup. Likewise, the largest partition cache records stored remaining states, not all allocations examined while those states were evaluated. A completed report should distinguish cache misses, cache hits, completed transitions, and internal allocation steps.

\subsection{Matched comparisons and scaling measurements}

A controlled comparison of state representations can retain the same edge-orbit formulas, pivot rule, and degree bounds while changing only the stored remaining state. One variant retains the identity of every permutation cycle and its residual degree, a second memoizes those identity-retaining states, and a third uses the histogram recurrence in \cref{eq:main-recurrence}. Recording both state counts and allocation work distinguishes the benefit of merging states from the benefit of reducing local branching.

\pending{Matched measurements for the three state representations, per-order resource measurements through $n=50$, and any additional degree values remain unavailable. The completed integer tables do not supply a timing ratio or a scaling curve.}

The public labeled-degree program should be compared with the identity-permutation contribution only, using the same degree, arithmetic requirements, and hardware. An explicit graph generator can instead be compared on its count-only mode over a common feasible range, with connected and unrestricted outputs reconciled before comparison. These two comparisons answer different questions, and the fact that a generator internally constructs representatives should remain explicit even when graph serialization is disabled.

For each completed order, a scaling study can report the sum of partition CPU times, total completed transitions, maximum and total partition-cache sizes, and peak resident memory under a fixed cache policy. The distribution over cycle types is as relevant as the total, because partitions with many equal short cycles need not behave like partitions with a few long cycles. Runs at degrees three and five would examine how local branching changes with degree, but such runs should be reported only after their own outputs and resource records have been checked.

\section{Discussion}
\label{sec:discussion}

The recurrence separates the combinatorial multiplicity of local edge choices from the number of distinct remaining counting problems. Large binomial and multinomial coefficients account for many labeled choices without introducing a separate recursive branch for every choice. The stored value is nevertheless an exact count, because all choices represented by one weighted transition have the same remaining completion count under \cref{thm:sufficient}.

The same interpretation explains both the usefulness and the limits of the compression. It is sufficient to store cycle length and residual degree only because all undecided edges among active cycles remain available and the desired property is the degree condition. Direct restrictions on girth, diameter, or connectivity can depend on previously chosen paths and components, so they generally require additional state information. Recovering connected regular-graph counts through \cref{eq:euler} avoids this difficulty without claiming that the histogram itself determines connectivity.

The fixed-degree complexity bound concerns the explicitly stated recurrence rather than the provenance of a particular implementation. The supplied counts through order 50 show the completed numerical range of the present calculation, but the 117-digit identity contributions and the 52-digit unrestricted totals do not measure its operation count. A speed comparison with other exact counting methods still requires matched measurements, including the internal allocation work that is not visible in a peak cache-size measurement.

\section{Conclusion}

Complete-cycle elimination turns each Burnside fixed-point calculation into a recurrence on the numbers of remaining cycles of each length and residual degree. Internal distance orbits and joining orbits give explicit local coefficients, while multinomial weights preserve the identities of target choices that disappear from the stored state. The resulting recurrence counts invariant labeled edge sets exactly and gives unrestricted unlabeled counts after the conjugacy-class average.

For fixed degree, the number of normalized histograms is bounded by a colored-partition generating function, and the residual-degree budget bounds the number of affected target cycles in each transition. These observations yield an $\exp(O_d(\sqrt n))$ time and storage bound and allow connected counts to be recovered afterward by an inverse Euler transform. The quartic calculation supplies unrestricted and connected counts through order 50, together with the disconnected differences and labeled identity contributions. Small-instance fixed-point comparisons, agreement of all 22 supplied identity terms with published labeled counts, and exact Euler reconciliation provide distinct checks, while an independent audit of the large-order nonidentity terms and matched resource measurements remain separate tasks.

\section*{Acknowledgements}

The author thanks the High-Performance Computing Center, School of Mathematics and Statistics, Nantong University, for providing the computational resources used in this work.

\section*{Data and code availability}

The code and data supporting this study are available on Zenodo at \url{https://doi.org/10.5281/zenodo.22961966}. The deposited archive contains the PARI/GP implementation of the permutation-fixed counting recurrence, Python and shell scripts for task preparation, parallel execution, aggregation, and exact-integer validation, together with the permutation-fixed count records and derived quartic graph sequences for orders \(n=27\)--\(50\). The sequence data include counts of all unlabeled, connected unlabeled, disconnected unlabeled, and labeled simple 4-regular graphs. All numerical values are stored as exact integers. The archive contains counting data and source code but does not include graph representatives.

\appendix
\section{Small examples and boundary cases}
\label{app:examples}

For a permutation consisting of one $a$-cycle, no joining choices remain and the fixed-point count is simply $[x^d]P_a(x)$. In the quartic case,
\begin{equation}
 F_4((a))=\binom{\lfloor(a-1)/2\rfloor}{2},
 \label{eq:single-cycle-quartic}
\end{equation}
where a binomial coefficient is zero when its lower index exceeds its upper index. For even $a$, the matching orbit cannot be included in a degree-four internal choice, since the other internal orbits all contribute even degree. This gives a direct check of the parity and matching conventions in \cref{eq:internal}.

For two distinct cycles of lengths four and six, there are two joining orbits, each contributing degree three to the 4-cycle and degree two to the 6-cycle. Their factor in \cref{eq:cycle-product} is $(1+x^3y^2)^2$. A degree-four budget on the 4-cycle permits at most one of these orbits, with multiplicity two. Reversing the pivot changes which degree increment is charged to the pivot but does not change the underlying fixed-point count.

To see the need for both factors in \cref{eq:class-weight}, consider eight labeled target cycles of length three and a pivot of the same length. An allocation that joins exactly three targets with one orbit each has target-selection multiplicity $\binom83=56$. There are three joining orbits for each selected target, so the complete local factor is
\begin{equation}
 \binom83\binom31^3=1512.
\end{equation}
The target-selection factor alone would omit most labeled edge sets, even though all these choices may lead to the same remaining histogram.

Finally, a state consisting of two fixed vertices each of residual degree one has exactly one completion. Selecting one as the pivot leaves a single possible joining edge, and the recurrence returns one without multiplying by the two available pivot identities. Exhausted cycles have the complementary boundary behavior, because all their future incident edges are forced absent and deleting them also contributes a factor of one.

\section{Complete quartic count tables}
\label{app:tables}

The following tables reproduce all four supplied sequences at orders 29--50 without rounding. The unrestricted and connected counts are listed separately so that each complete integer fits on one line, and the disconnected column is printed independently to retain the full supplied data.

For the labeled sequence, an integer longer than 60 digits is continued on the following line within the same table row. Concatenating those lines recovers the integer exactly; the line break has no numerical meaning. The supplied labeled values agree with the already published A005815 values and are not claimed as new labeled counts.

\begingroup
\small
\renewcommand{\arraystretch}{1.0}
\setlength{\LTleft}{0pt}
\setlength{\LTright}{\fill}
\setlength{\LTcapwidth}{\textwidth}
\setlength{\tabcolsep}{8pt}
\begin{longtable}{@{}>{\raggedleft\arraybackslash}p{0.7cm} p{\dimexpr\textwidth-0.7cm-16pt\relax}@{}}
\caption{Computed unrestricted unlabeled quartic graph counts for $29\leq n\leq50$. These orders are beyond the A033301 snapshot through order 28.}\label{tab:unrestricted}\\
\toprule
$n$ & $U_4(n)$\\
\midrule
\endfirsthead
\multicolumn{2}{@{}l}{\textit{Table \thetable\ continued}}\\
\toprule
$n$ & $U_4(n)$\\
\midrule
\endhead
\midrule
\multicolumn{2}{r@{}}{\textit{Continued on next page}}\\
\endfoot
\bottomrule
\endlastfoot
\UnrestrictedTableRows
\end{longtable}
\endgroup

\clearpage
\begingroup
\small
\renewcommand{\arraystretch}{1.0}
\setlength{\LTleft}{0pt}
\setlength{\LTright}{\fill}
\setlength{\LTcapwidth}{\textwidth}
\setlength{\tabcolsep}{8pt}
\begin{longtable}{@{}>{\raggedleft\arraybackslash}p{0.7cm} p{\dimexpr\textwidth-0.7cm-16pt\relax}@{}}
\caption{Connected unlabeled quartic graph counts for $29\leq n\leq50$, obtained from the unrestricted counts by the inverse Euler transform. These orders are beyond the A006820 snapshot through order 28.}\label{tab:connected}\\
\toprule
$n$ & $C_4(n)$\\
\midrule
\endfirsthead
\multicolumn{2}{@{}l}{\textit{Table \thetable\ continued}}\\
\toprule
$n$ & $C_4(n)$\\
\midrule
\endhead
\midrule
\multicolumn{2}{r@{}}{\textit{Continued on next page}}\\
\endfoot
\bottomrule
\endlastfoot
\ConnectedTableRows
\end{longtable}
\endgroup

\clearpage
\begingroup
\small
\renewcommand{\arraystretch}{1.0}
\setlength{\LTleft}{0pt}
\setlength{\LTright}{\fill}
\setlength{\LTcapwidth}{\textwidth}
\setlength{\tabcolsep}{8pt}
\begin{longtable}{@{}>{\raggedleft\arraybackslash}p{0.7cm} p{\dimexpr\textwidth-0.7cm-16pt\relax}@{}}
\caption{Disconnected unlabeled quartic graph counts for $29\leq n\leq50$. The values at orders 29--33 agree with the A033483 snapshot, and orders 34--50 are beyond that snapshot.}\label{tab:disconnected}\\
\toprule
$n$ & $D_4(n)$\\
\midrule
\endfirsthead
\multicolumn{2}{@{}l}{\textit{Table \thetable\ continued}}\\
\toprule
$n$ & $D_4(n)$\\
\midrule
\endhead
\midrule
\multicolumn{2}{r@{}}{\textit{Continued on next page}}\\
\endfoot
\bottomrule
\endlastfoot
\DisconnectedTableRows
\end{longtable}
\endgroup

\clearpage
\begingroup
\small
\renewcommand{\arraystretch}{1.0}
\setlength{\LTleft}{0pt}
\setlength{\LTright}{\fill}
\setlength{\LTcapwidth}{\textwidth}
\setlength{\tabcolsep}{8pt}
\begin{longtable}{@{}>{\raggedleft\arraybackslash}p{0.7cm} p{\dimexpr\textwidth-0.7cm-16pt\relax}@{}}
\caption{Labeled quartic graph counts for $29\leq n\leq50$, equal to the identity-permutation contributions. All entries agree with A005815. Continued digit strings within a row are concatenated without a separator.}\label{tab:labeled}\\
\toprule
$n$ & $L_4(n)=F_4(1^n)$\\
\midrule
\endfirsthead
\multicolumn{2}{@{}l}{\textit{Table \thetable\ continued}}\\
\toprule
$n$ & $L_4(n)=F_4(1^n)$\\
\midrule
\endhead
\midrule
\multicolumn{2}{r@{}}{\textit{Continued on next page}}\\
\endfoot
\bottomrule
\endlastfoot
\LabeledTableRows
\end{longtable}
\endgroup

\end{document}